\documentclass[12pt,reqno]{article}
\usepackage[margin=.9in]{geometry}
\usepackage{graphicx}

\usepackage{authblk}

\usepackage[small,bf,width=\textwidth]{caption}
\usepackage{nicefrac}
\usepackage{hyperref}
\hypersetup
{
    colorlinks,	%
    citecolor=blue,%
    filecolor=black,%
    linkcolor=blue,%
    urlcolor=blue
}
\usepackage{amssymb,amsmath,amsthm,amscd}
\usepackage{eulervm, xfrac}
\usepackage[sans]{dsfont}
\usepackage{mathdots}
\usepackage{mathrsfs}
\usepackage{rotating, setspace}
\DeclareMathAlphabet{\mathbfsf}{\encodingdefault}{\sfdefault}{bx}{n}
\usepackage[all]{xy}
\usepackage{enumerate}
\DeclareFontShape{OT1}{cmr}{bx}{sc}{<-> cmbcsc10}{}

\usepackage{sectsty}
\allsectionsfont{\sffamily\bfseries}

\usepackage{comment}
\usepackage{authblk}

\usepackage{soul}
\usepackage{todonotes}

\usepackage{multirow}%

\usepackage{mathtools}
\usepackage[title]{appendix}%
\usepackage{xcolor}%
\usepackage[capitalise]{cleveref}
\usepackage{quiver}%
\usepackage{textcomp}%
\usepackage{manyfoot}%
\usepackage{booktabs}%
\usepackage{algorithm}%
\usepackage{enumerate}   
\usepackage{algorithmicx}%
\usepackage{algpseudocode}%
\usepackage{listings}%

\usepackage{multirow}
\usepackage{bbm}

\usepackage{thmtools}
\usepackage{thm-restate}

\theoremstyle{definition} 
\newtheorem{definition}{Definition}[section]

\newtheorem{observation}[definition]{Observation}
\newtheorem{example}[definition]{Example}

\theoremstyle{plain}
\newtheorem{theorem}[definition]{Theorem}
\newtheorem{lemma}[definition]{Lemma}
\newtheorem{corollary}[definition]{Corollary}

\newcommand{\N}{\mathbb{N}}
\newcommand{\R}{\mathbb R}
\newcommand{\Z}{\mathbb Z}

\newcommand{\barc}{B}
\newcommand{\Barc}{\textbf{Barc}}
\newcommand{\JS}{\textbf{JS}}
\newcommand{\Betti}{\textbf{Betti}}
\newcommand{\define}[1]{\textbf{#1}}

\begin{document}

\title{\textsf{Counting Barcodes with the same Betti Curve}}

\author{\textsf{Henry Ashley, H\r{a}vard Bakke Bjerkevik, Justin Curry,} \\ \textsf{Riley Decker, Robert Green}\footnote{Department of Mathematics and Statistics, SUNY Albany}}

\date{\textsf{\today}}


















\maketitle

\abstract{This paper considers an important inverse problem in topological data analysis (TDA): How many different barcodes produce the same Betti curve? 
    Equivalently, given a function \[\beta\colon [n]=\{1<\cdots< n\} \to \mathbb{Z}_{\geq 0},\] how many different ways can we write $\beta$ as a sum of indicator functions supported on intervals in $[n]$?
    Our answer to this question is to connect persistent homology with the study of the Kostant partition function and the enumerative combinatorics for so-called ``magic'' juggling sequences studied by Ronald Graham and others. 
    Specifically, we prove an equivalence between our inverse problem and corresponding statements in these other two settings.
    From an applications and statistics point of view, our work provides a quantification of how lossy the TDA pipeline is when moving from persistent homology to persistent Betti numbers.}



\section{Introduction}
\subsection{Context and Prior Work}\label{sec:context}

Topological Data Analysis (TDA) provides an impressive toolkit for the extraction and summary of shape in data. 
The input to the TDA pipeline can be one of several types of data: a finite subset of $\mathbb{R}^d$, a semi-algebraic set, a scalar or vector field on a manifold, and so on.
The output of the TDA pipeline can also be one of several types of summary objects: a merge tree, a Reeb graph, a persistence diagram/barcode, or a piecewise constant $\mathbb{Z}$-valued function, such as a Betti curve or Euler curve.
By fixing the class of input data and the type of output object, one naturally encounters an \emph{inverse problem}: How far is the TDA pipeline from being injective for that class of data set?
Since topology necessarily ignores certain aspects of a data set, it is to be expected that this mapping is not injective and typically will have high-dimensional, geometrically complex fibers\footnote{As a reminder, the \define{fiber} of a mapping $f\colon Y\to X$ over a point $x\in X$ is its pre-image $f^{-1}(x)$.}.
This is evidenced by a robust line of research into inverse problems in TDA, of which \cite{gameiro2016continuation,gasparovic2018complete,curry2018fiber,kanari2020trees,catanzaro2020moduli,oudot2020inverse,desha2021inverse,curry2022many,leygonie2022fiber,leygonie2022differential,garin2022trees,mallery2022lattice,solomon2023geometry,beers2023fiber,beers2023topology,curry2024trees,smith2024generic} is a likely incomplete sampling.

This paper extends existing research~\cite{curry2018fiber,kanari2020trees,garin2022trees,mallery2022lattice,curry2024trees} into inverse problems between TDA objects themselves, which tends to have a more combinatorial flavor. 
Those papers were mostly concerned with understanding how $\pi_0$, the \emph{set} of connected components, refines $H_0$, the \emph{vector space} freely generated by the set of connected components.
In the persistence setting, this leads to the now well-understood mapping from merge trees to barcodes.
The problem considered here is, in some aspects, similar as we consider how lossy the process of replacing a vector space with its dimension is.

\subsection{Problem Statement}
\label{sec:problem_statement}

In this paper we formulate and shed light on a hitherto unconsidered inverse problem between two well-studied TDA objects---barcodes and Betti curves---in the finitely indexed setting.
To explain, recall the following: Given any functor 
\[
F\colon \mathbf{[n]}=\{1 < \cdots < n\} \to \mathbf{vect}
\]
from the totally ordered set on $n$ elements to the category of finite-dimensional vector spaces, one has an associated dimension (or Hilbert) function 
\[
\dim(F)\colon [n]\to \mathbb{Z}_{\geq 0},
\]
which assigns to each $i\in[n]$ the dimension of the vector space $F(i)$.
Our inverse problem is the following:
\begin{quote}
\normalsize
    {\it \underline{Given:} a dimension function $h\colon [n]\to \mathbb{Z}_{\geq 0}$.}
\vspace{.1in}

    \noindent {\it \underline{Enumerate:} all functors $F\colon\mathbf{[n]} \to \mathbf{vect}$, up to iso, with $\dim F = h$.}
\end{quote}
Equivalently, a functor $F\colon\mathbf{[n]} \to \mathbf{vect}$ is a type $A_n$ quiver representation, whose isomorphism class is determined by its decomposition into interval representations.
The dimension function and the multiset of intervals resulting from the decomposition are how we get our notions of Betti curves and barcodes.


\begin{definition}
A \define{Betti curve of length $n$} is a function $\beta\colon [n] \to \Z_{\geq 0}$, or equivalently, a finite sequence $(\beta_1, \beta_2, \ldots, \beta_{n})$ where $\beta_i=\beta(i)$.
We define $\Betti(n)$ as the set of Betti curves of length $n$. Since $\beta(i)\in \Z_{\geq 0}$ can be arbitrary, $\Betti(n)$ is necessarily an infinite set.
\end{definition}


For $i\leq j$, we use the notation $[i,j)$ for the set $\{i,i+1,\dots,j-1\}$, and we refer to nonempty sets of this form as \define{intervals}.
For instance, $[n] = [1,n+1)$ by this convention.


\begin{definition}
A \define{barcode} is a multiset of intervals.
In other words, a barcode is a set 
\[
\barc= \{(I_j,m_j) \mid I_j \text{ is an interval in }[n] \text{ and } m_j\in \Z_{\geq 0} \text{ indicates multiplicity}\}
\]
We define $\Barc(n)$ as the set of barcodes whose intervals are subsets of $[n]$.
\end{definition}


Observe that we can associate a Betti curve $\beta$ to a barcode $\barc$ by letting $\beta(i)$ be the number of intervals in $\barc$ that contain $i$.
More precisely, let $\barc= \{(I_j,m_j)\}$. 
We define the function $\varphi\colon \Barc(n) \to \Betti(n)$ that associates a Betti curve to a barcode in the following way:
\begin{align*}
\varphi(\barc) &=  \beta,\\
\beta(i) &=\sum_{j \mid I_j\cap \{i\} \neq \varnothing} m_j.
\end{align*}
Note that $\varphi$ is surjective: for $\beta=(\beta_1, \beta_2, \ldots, \beta_{n})$, we can construct $\barc\in \varphi^{-1}(\beta)$ by letting $\barc$ have $\beta_i$ copies of $[i,i+1)$ for each $i$.
We define 
\[
\Barc(\beta) \coloneqq \varphi^{-1}(\beta). 
\]
In other words, $\Barc(\beta)$ is the set of barcodes whose Betti curve is $\beta$; it characterizes the fiber of the dimension function for all type $A_n$ quiver representations.
Therefore, our inverse problem is equivalent to the following: 

\begin{quote}
\normalsize
    {\it \underline{Given:} a Betti curve $\beta\colon[n]\to \mathbb{Z}_{\geq 0}$.}
\vspace{.1in}

    \noindent {\it \underline{Enumerate:} all barcodes $B$ in $\Barc(\beta)$.}
\end{quote}

For a concrete example of our enumeration problem, if $n=3$ and one has the Betti curve $\beta=(2,3,2)$, then there are 13 different barcodes with this Betti curve.

\begin{figure}[h]
    \centering
    \captionsetup{width=.85\textwidth}
    \includegraphics[width=.85\textwidth]{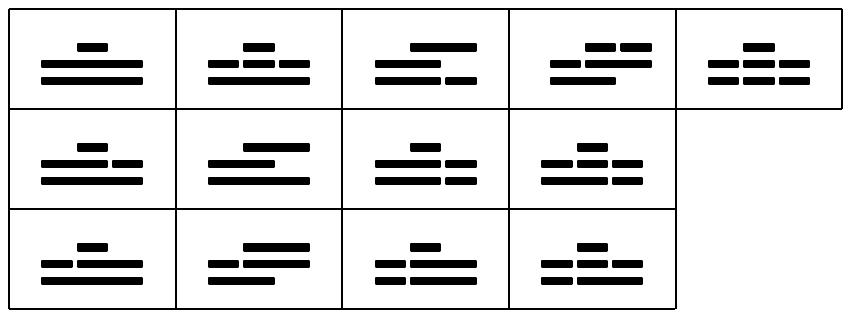}
    \caption{A table of all the barcodes with the Betti curve $\beta =(2,3,2)$. Here each bar is its own interval and bars at the same height are there only for visual convenience.}
    \label{fig:232_barcode_table}
\end{figure}

\subsection{Main Contributions}\label{sec:contributions}

We provide a solution to the barcode counting problem in several ways: 
First, we reinterpret the problem through the lens of Lie algebras, witnessing that the answer to our question in the type $A_n$ Lie algebra setting is exactly the Kostant partition function $K$. 

\begin{restatable}{thmx}{barcCountingKostant}\label{thm:barc_counting_is_kostant}
    If $\mu= \sum\limits_{i=1}^{n} \beta_i \alpha_{i}$ is a weight of the Lie algebra of type $A_n$, then 
    \[|\mathbf{Barc}(\beta)| = K(\mu).\]
\end{restatable}

In the above, we use $\alpha_i$ to denote the simple root $\mathbf{e}_i - \mathbf{e}_{i+1}$ in a root system of type $A_n$.
The Kostant partition function is a famous object in representation theory and algebraic combinatorics; it counts the number of ways a weight of a Lie algebra can be partitioned into a non-negative integral linear combination of positive roots.
The appearance of the Kostant partition function becomes completely intuitive when one recalls Gabriel's Theorem, which states that the indecomposables of a Dynkin quiver representation are in one-to-one correspondence with the positive roots of the root system for the corresponding Dynkin diagram. Since a persistence module with field coefficients is an $A_n$ quiver representation, this relationship between barcodes and combinations of positive roots is unsurprising.

In Section \ref{sec_recursive}, we focus on computational aspects of this inverse problem, and provide an explicit recursive formula for enumerating the number of elements of $\Barc(\beta)$. This formula was first observed by Schmidt and Bincer \cite{SchmidtBincer} in the context of the Kostant partition function; we provide an alternative proof of the result, along with a pictorial interpretation.  

In \Cref{sec:bijection}, we establish a novel bijection between barcodes and so-called ``magic multiplex'' juggling sequences~\cite{Multiplex}.
A juggling sequence is a sequence of juggling states, and a juggling state is a vector $\textbf{s}=\langle s_1,s_2, \dots, s_h\rangle$, which can be interpreted as the heights of all the balls currently in the air.
If $s_3=2$, then there are two balls in the air that we will catch in three time steps and immediately throw up in the air again.
Negative numbers in a juggling state vector are allowed---these will eventually cancel out balls thrown to appropriate heights.
Let $\JS(\textbf{a},\textbf{b},n)$ denote the set of all magic juggling sequences of length $n$ with initial state \textbf{a} and terminal state \textbf{b}, and let $\JS(\leq n+1,\textbf{b},n)$ be the set of magic juggling sequences of length $n$ with terminal state $\textbf{b}$ and whose initial state is zero after the first $n+1$ entries.
For a Betti curve $\beta$, let $\delta(\beta)$ be the vector with $\delta(\beta)_i = \beta_i-\beta_{i-1}$, where we set $\beta_0=0$.
We define a map $\sigma$ sending barcodes to juggling sequences and prove the following.

\begin{restatable}{thmx}{bijectionThm}\label{thm:bijection}
The map $\sigma\colon \Barc(n) \to \JS(\leq n+1,\langle 0 \rangle, n)$ is a bijection.
Moreover, for every Betti curve $\beta=(\beta_1, \beta_2, \dots, \beta_{n})$, $\sigma$ restricts to a bijection $\sigma_\beta\colon \Barc(\beta) \to \JS(\langle \delta(\beta) \rangle,\langle 0 \rangle, n)$.    
\end{restatable}

This then connects TDA to the historically rich area of the mathematics of juggling~\cite{Buhler1994,Butler2017,Chung,Ehrenborg1996,MagicJuggling,PrimeJuggling,shannon1993scientific,polster2003mathematics}.
The basic idea of our bijection is that an interval in a barcode that is born at time $i$ and dies at time $j$ is equivalently viewed as a juggling move where a ball at time $i$ is thrown to height $j-i$.
A Betti curve is then equivalently a count of the number of balls in the air at any time $i$.

Moreover, by building on \cite{MagicJuggling}, we recover \Cref{thm:barc_counting_is_kostant} as a corollary of \Cref{thm:bijection}.
The culmination of our work is thus a modern Rosetta stone between TDA, Juggling, and the study of combinatorial aspects of Lie algebras, shown below.
\vspace{5mm}

\begin{center}
        \begin{tabular}{|c|c|c|}
        \hline
             TDA object & Juggling analog & Lie algebra analog\\
        \hline
        \hline
            Interval module & \multirow{2}{*}{Throw at time $i$ to height $j-i$}  & Positive root of $A_n$ \\
            $\mathbbm{k}[i,j)$ & & $\mathbf{e}_i - \mathbf{e}_j$\\
        \hline
        Barcode & Juggling sequence & Partition of a weight \\
        \hline
            Betti curve & Collection of juggling sequences & Weight of $A_n$ \\
        \hline
            $|\mathbf{Barc}(\beta)|$ & $|\JS(\langle\delta(\beta)\rangle, \langle 0\rangle, n)|$ & $K\Big(\sum\limits_{i=1}^n \beta_i \alpha_i\Big)$\\
        \hline
        \end{tabular}
\end{center}

\vspace{5mm}


\section{Background}
\subsection{Representation Theoretic Aspects of TDA}\label{sec_rep_theory}

    Our basic objects of study in TDA are \define{persistence modules}, i.e., functors $F\colon \mathbf{[n]} \to \mathbf{vect}$. 
    Alternatively, we can view these functors as examples of \define{representations} of the type $A_n$ Dynkin quiver
    \[
    \bullet \to \bullet \to \dots \to \bullet,
    \]
    which is obtained by replacing each vertex with a vector space and each arrow with a linear transformation.
    In other words, it has the form
    \[
    V_1 \xrightarrow{\phi_1} V_2 \xrightarrow{\phi_2} \dots \xrightarrow{\phi_{n-1}} V_n.
    \]
    In representation theoretic language, a Betti curve is simply a dimension vector, i.e. the vector associated to a representation which records the dimension of the vector spaces at each index. 
    
    When the vector spaces are finite-dimensional, such representations are of a particularly nice form, thanks to the following two theorems.
    \begin{theorem}[Azumaya-Krull-Remak-Schmidt]{\cite{azumaya1950corrections}}
    Let $Q$ be a finite quiver, and let $\mathbbm{k}$ be a field. Every finite dimensional representation $\mathbb{V}$ of $Q$ over $\mathbbm{k}$ decomposes uniquely (up to isomorphism and permutation of indecomposable summands) as a direct sum 
    \[
    \mathbb{V}^1 \oplus \mathbb{V}^2 \oplus \dots \oplus \mathbb{V}^m,
    \]
    where each $\mathbb{V}^i$ is indecomposable.
    \end{theorem}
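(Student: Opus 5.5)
The plan has two parts: existence of a decomposition, by induction on total dimension, and uniqueness, via Fitting's lemma and an exchange argument. Throughout, I would work in the abelian category $\mathrm{rep}_{\mathbbm{k}}(Q)$ of finite-dimensional representations. Its morphisms are families of linear maps commuting with the arrow maps, and kernels, images and direct sums are computed vertexwise. Let $\dim \mathbb{V} = \sum_{v} \dim V_v$ denote the total dimension, which is finite because $Q$ has finitely many vertices and each $V_v$ is finite-dimensional.

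\textbf{Existence.} I would induct on $\dim\mathbb{V}$. If $\mathbb{V}=0$ we take the empty sum. If $\mathbb{V}$ is indecomposable there is nothing to prove. Otherwise $\mathbb{V}=\mathbb{W}\oplus\mathbb{W}'$ with both summands nonzero, so both have strictly smaller total dimension. Applying the induction hypothesis to each summand and concatenating the two decompositions gives a decomposition of $\mathbb{V}$.

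\textbf{Fitting's lemma.} For an endomorphism $f$ of $\mathbb{V}$, the chains $\operatorname{im} f^k$ and $\ker f^k$ stabilize by dimension count. Choose $N$ with $\operatorname{im} f^N=\operatorname{im} f^{2N}$ and $\ker f^N=\ker f^{2N}$. The standard argument then gives $\mathbb{V}=\operatorname{im} f^N\oplus\ker f^N$ as subrepresentations, which are subrepresentations because $f$ commutes with the arrow maps. Now suppose $\mathbb{V}$ is indecomposable. One summand must vanish, so every endomorphism is either an automorphism or nilpotent. From this I would deduce that $\operatorname{End}(\mathbb{V})$ is a local ring by checking the following fact. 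If $f$ and $g$ are nilpotent but $f+g$ were invertible, write $h=(f+g)^{-1}$. Then $hf=1-hg$. Here $hf$ is not invertible since $f$ is not, so $hf$ is nilpotent. Hence $1-hg$ is nilpotent, so $hg=1-(1-hg)$ is invertible, contradicting the nilpotency of $g$. Thus the non-units form an ideal.

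\textbf{Uniqueness.} Suppose $\mathbb{V}=\bigoplus_{i=1}^m\mathbb{V}^i=\bigoplus_{j=1}^{m'}\mathbb{W}^j$ with all summands indecomposable. I would induct on $m$. Let $\pi_1,\iota_1$ be the projection and inclusion for $\mathbb{V}^1$, and $p_j,q_j$ those for $\mathbb{W}^j$. Then
\[
\mathrm{id}_{\mathbb{V}^1}=\sum_j (\pi_1 q_j)(p_j\iota_1).
\]
Since $\operatorname{End}(\mathbb{V}^1)$ is local, some term $\pi_1q_jp_j\iota_1$ is an automorphism; relabel so that $j=1$. Then $p_1\iota_1\colon\mathbb{V}^1\to\mathbb{W}^1$ is a split monomorphism. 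Consequently $e=p_1\iota_1(\pi_1q_1p_1\iota_1)^{-1}\pi_1q_1$ is an idempotent of $\operatorname{End}(\mathbb{W}^1)$, and it is nonzero. Since $\mathbb{W}^1$ is indecomposable, $e=\mathrm{id}$, and so $p_1\iota_1$ is an isomorphism $\mathbb{V}^1\cong\mathbb{W}^1$.

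\textbf{Exchange step (expected main obstacle).} I must show $\mathbb{V}=\mathbb{V}^1\oplus\bigoplus_{j\ge2}\mathbb{W}^j$. Consider the map
\[
\theta=\iota_1\pi_1+\sum_{j\geq 2}q_jp_j\colon\mathbb{V}\to\mathbb{V}.
\]
Composing with the decomposition $\bigoplus_j\mathbb{W}^j$, $\theta$ is block triangular with diagonal blocks $p_1\iota_1\pi_1q_1$ and the identities. The first block is an automorphism of $\mathbb{W}^1$, since $\pi_1q_1$ is a left inverse of the isomorphism $p_1\iota_1$ and is hence its inverse. So $\theta$ is an automorphism of $\mathbb{V}$. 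It maps $\mathbb{V}^1$ and $\bigoplus_{j\ge2}\mathbb{W}^j$ onto complementary pieces, which gives the exchange. Passing to the quotient $\mathbb{V}/\mathbb{V}^1$ then yields
\[
\bigoplus_{i\ge2}\mathbb{V}^i\cong\mathbb{V}/\mathbb{V}^1\cong\bigoplus_{j\ge2}\mathbb{W}^j.
\]
The induction hypothesis now gives $m-1=m'-1$ and matches the remaining summands up to isomorphism and permutation. Verifying carefully that $\theta$ is invertible and that the complements line up is the only delicate bookkeeping; everything else is formal.
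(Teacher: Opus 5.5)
There is no proof in the paper to compare against: the statement is quoted from Azumaya as background. Your route is the standard proof and is essentially the mechanism the cited result generalizes (uniqueness for direct sums of modules with local endomorphism rings). It goes through Fitting's lemma, then locality of endomorphism rings of indecomposables, then an exchange. The following parts are fine:
\begin{itemize}
\item existence by induction on dimension;
\item Fitting's lemma;
\item locality of $\operatorname{End}(\mathbb{V}^1)$;
\item the proof that $p_1\iota_1\colon\mathbb{V}^1\to\mathbb{W}^1$ is an isomorphism.
\end{itemize}
There is one small slip in that last part. $\pi_1q_1$ is not a left inverse of $p_1\iota_1$, since $u=\pi_1q_1p_1\iota_1$ is only an automorphism. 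What is true is $\pi_1q_1=u\,(p_1\iota_1)^{-1}$. This is still an isomorphism, so $p_1\iota_1\pi_1q_1$ is indeed an automorphism of $\mathbb{W}^1$.

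The exchange step, however, fails as written. Take your $\theta=\iota_1\pi_1+\sum_{j\ge2}q_jp_j$ and compute its blocks with respect to $\bigoplus_j\mathbb{W}^j$:
\begin{itemize}
\item $p_k\theta q_l=p_k\iota_1\pi_1q_l$ for $k\neq l$;
\item $p_k\theta q_k=\mathrm{id}+p_k\iota_1\pi_1q_k$ for $k\ge2$.
\end{itemize}
So $\theta$ is not block triangular, and its diagonal blocks past the first are not identities. For a concrete case, take the one-vertex quiver with $\mathbb{V}=\mathbbm{k}^2$ and $\operatorname{char}\mathbbm{k}\neq2$. Set $\mathbb{V}^1=\langle e_1\rangle$, $\mathbb{V}^2=\langle e_2\rangle$, $\mathbb{W}^1=\langle e_1+e_2\rangle$, $\mathbb{W}^2=\langle e_1-e_2\rangle$. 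In the basis $e_1+e_2,\,e_1-e_2$ the matrix of $\theta$ is $\left(\begin{smallmatrix}1/2&1/2\\1/2&3/2\end{smallmatrix}\right)$. Moreover, $\theta$ preserves neither $\mathbb{V}^1$ nor $\bigoplus_{j\ge2}\mathbb{W}^j$. So the claim that it ``maps them onto complementary pieces'' presupposes the decomposition you are trying to prove.

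The repair is to precompose the first term with $q_1p_1$. Set $\theta'=\iota_1\pi_1q_1p_1+\sum_{j\ge2}q_jp_j$. Then $p_k\theta'q_1=p_k\iota_1\pi_1q_1$, and $p_k\theta'q_l=\delta_{kl}\,\mathrm{id}$ for $l\ge2$. This is exactly the lower block-triangular shape you described, so $\theta'$ is an automorphism. It carries $\mathbb{W}^1$ onto $\mathbb{V}^1$ and fixes $\bigoplus_{j\ge2}\mathbb{W}^j$ pointwise. Hence it transports $\mathbb{W}^1\oplus\bigoplus_{j\ge2}\mathbb{W}^j$ to the decomposition $\mathbb{V}=\mathbb{V}^1\oplus\bigoplus_{j\ge2}\mathbb{W}^j$.

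In this finite-dimensional setting you can skip $\theta$ entirely. First, $\mathbb{V}^1\cap\bigoplus_{j\ge2}\mathbb{W}^j=\mathbb{V}^1\cap\ker p_1=0$, because $p_1\iota_1$ is injective. Second, at each vertex the dimensions of $\mathbb{V}^1$ and $\bigoplus_{j\ge2}\mathbb{W}^j$ add up to that of $\mathbb{V}$, because $\mathbb{V}^1\cong\mathbb{W}^1$. With either fix, the quotient step and the induction go through as you describe.
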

    
    \begin{theorem}[Gabriel's Theorem for type $A_n$ quivers]{\cite{gabriel1972unzerlegbare}}
    Let $Q$ be a type $A_n$ quiver, and let $\mathbbm{k}$ be a field. 
    \begin{enumerate}
        \item Every indecomposable, finite-dimensional representation of $Q$ is isomorphic to one of the form\footnote{The directions of the arrows in this statement actually depend on their direction in $Q$. We choose all arrows of $Q$ to go forward, since representations of this quiver are persistence modules indexed by the totally ordered set $\{1< \ldots < n\}$.}
    \[
    0 \to \dots \to  0 \to \mathbbm{k} \xrightarrow{\text{id}} \mathbbm{k} \xrightarrow{\text{id}} \dots \xrightarrow{\text{id}} \mathbbm{k} \to 0 \to \dots \to 0 .
    \]
    \item Moreover, these indecomposables are in bijection with the positive roots of the $A_n$ root system, constructions that are reviewed below.
    \end{enumerate}
    \end{theorem}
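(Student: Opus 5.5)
The plan is to prove the two parts separately: first show that every finite-dimensional representation $V_1 \xrightarrow{\phi_1} \cdots \xrightarrow{\phi_{n-1}} V_n$ is a direct sum of interval modules $\mathbbm{k}[i,j)$, and that each $\mathbbm{k}[i,j)$ is indecomposable. Part (1) then follows from the Azumaya--Krull--Remak--Schmidt theorem stated above. An indecomposable representation is a direct sum of intervals, hence consists of a single interval, by uniqueness of the decomposition.

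Indecomposability of $\mathbbm{k}[i,j)$ is the easy step. An endomorphism is a scalar at each vertex of $[i,j)$. The identity maps along the interval force all these scalars to agree, so $\mathrm{End}(\mathbbm{k}[i,j))\cong\mathbbm{k}$. This ring is local, so $\mathbbm{k}[i,j)$ has no nontrivial idempotents and is indecomposable.

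For existence of the decomposition I would induct on $n$; the case $n=1$ is just a choice of basis. Assume the restriction $V'$ of $V$ to $\{1,\dots,n-1\}$ is a direct sum of intervals. Split $V'=M\oplus W$, where $M$ collects the summands not containing $n-1$ and $W$ collects the summands $[a,n)$. The summands in $M$ are zero at $n-1$, so they extend by $0$ at vertex $n$ and split off from $V$. What remains is a choice of generators $w_a$, one for each bar $[a,n)$ of $W$, with images $\phi_{n-1}(w_a)\in V_n$.

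The main obstacle is making these images compatible with a splitting. For $b\le a$ there is a morphism $\mathbbm{k}[a,n)\to\mathbbm{k}[b,n)$, since the older bar receives maps from the younger one. Consequently we may replace the generator $w_a$ of a younger bar by $w_a+c\,w_b$ for any older bar $b\le a$ without changing the isomorphism type of $V'$.
\begin{enumerate}
\item Order the generators from oldest to youngest.
\item Perform column reduction on the matrix of $\phi_{n-1}$ in these generators. Each column may only be modified by adding multiples of columns of older bars, as in the elder rule. The result is that the nonzero columns are linearly independent and the remaining columns are zero.
\item A generator with a zero column spans a summand $[a,n)$ of $V$.
\item A generator with a nonzero column extends to a summand $[a,n+1)$, with $V_n$ in that summand spanned by its image.
\item Choose a complement in $V_n$ to the span of the nonzero images; it contributes bars $[n,n+1)$.
\end{enumerate}
Checking that this gives a direct sum decomposition of $V$ is then routine.

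For part (2), send $\mathbbm{k}[i,j)$, where $1\le i<j\le n+1$, to $\mathbf{e}_i-\mathbf{e}_j=\alpha_i+\alpha_{i+1}+\cdots+\alpha_{j-1}$. Its dimension vector, written in the basis of simple roots, is exactly the vector of coefficients of this root. The positive roots of $A_n$ are precisely the vectors $\mathbf{e}_i-\mathbf{e}_j$ with $i<j$ in $\{1,\dots,n+1\}$. Distinct intervals give distinct roots, and both sets have $\binom{n+1}{2}$ elements, so the map is a bijection.
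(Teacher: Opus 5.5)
Your proof is correct. There is no proof in the paper to compare it with: the paper simply cites Gabriel. The only related reasoning there is the remark, in the proof that $|\mathbf{Barc}(\beta)|=K(\mu)$, that the bijection of part 2 is induced by the dimension vector, $\mathbbm{k}[i,j)\mapsto\alpha_i+\cdots+\alpha_{j-1}=\mathbf{e}_i-\mathbf{e}_j$. That is exactly your part 2.

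For part 1 you replace the citation with a self-contained, constructive induction on $n$. It is in effect the standard persistence column-reduction algorithm with the elder rule. You correctly justify the direction of the allowed column operations (adding older columns to younger ones) by $\mathrm{Hom}(\mathbbm{k}[a,n),\mathbbm{k}[b,n))\neq 0$ for $b\le a$. Your route gives an elementary proof that doubles as an algorithm. The citation gives generality: Gabriel's theorem, or the reflection-functor proof of Bernstein, Gelfand and Ponomarev, handles every orientation of $A_n$ (as the paper's footnote alludes to) and every Dynkin type. There the positive roots are defined intrinsically via the Tits form, so the bijection is a theorem rather than a direct computation. For the forward orientation the paper actually uses, your argument suffices.

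Three small points of precision:
\begin{enumerate}
\item The step ``replace $w_a$ by $w_a+c\,w_b$ without changing the isomorphism type of $V'$'' should instead say that you apply the automorphism of $W$ given by the identity plus $c$ times the composite $W\to\mathbbm{k}[a,n)\to\mathbbm{k}[b,n)\to W$. This is what guarantees that the modified bars still form an internal direct-sum decomposition.
\item In part 2 the bijection is on isomorphism classes. It is well defined and injective because dimension vectors are isomorphism invariants and distinct intervals have distinct dimension vectors.
\item Uniqueness in Krull--Remak--Schmidt is not needed in part 1. Indecomposability of $V$ alone forces any decomposition into intervals to have a single summand.
\end{enumerate}
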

    
    We call such indecomposables \define{interval representations}, and call the first index of $\mathbbm{k}$ the \define{birth} of the interval representation, and the first zero vector space after the birth the \define{death} of the interval representation. We use $\mathbbm{k}[i,j)$ to denote an interval representation with birth $i$ and death $j$.
    These theorems are what give us our definition of barcodes. Since every type $A_n$ quiver representation decomposes (up to isomorphism) as a direct sum of interval representations, we can completely characterize the isomorphism type of the representation with a multiset of birth-death pairs coming from the intervals.

    Often overlooked by the TDA community, the second part of Gabriel's Theorem gives us a direct connection to Lie algebras, which we now explain.
    
\subsection{Lie Algebras and the Kostant Partition Function}\label{sec:lie_alg}
Because we are interested in the Lie algebra of type $A_n$, we state our definitions at this level of specificity. For further background on Lie algebras and their representation theory, see \cite{humphreys2012introduction}.

\begin{definition}
    A \define{root system} of type $A_n$ is the set of vectors 
    \[
    \Phi_{A_n}=\{ \pm(\mathbf{e}_i - \mathbf{e}_j) : 1 \leq i < j \leq n+1\},
    \] 
    where each $\mathbf{e}_i$ is a standard basis vector. Elements of $\Phi_{A_n}$ are called \define{roots}, and we use $\Phi_{A_n}^+\subset \Phi_{A_n}$ to denote the set of positive roots, that is, the roots of the form $\mathbf{e}_i - \mathbf{e}_j$ with $i< j$. The \define{simple roots} of $\Phi_{A_n}$ are 
    \[
        \{\alpha_i = \mathbf{e}_i - \mathbf{e}_{i+1} : 1\leq i \leq n\}.
    \]
    We note that every element of $\Phi_{A_n}^+$ can be written uniquely as a positive sum of simple roots. 
    In particular, $\mathbf{e}_i - \mathbf{e}_j = \alpha_i + \ldots + \alpha_{j-1}$.
\end{definition}

\begin{definition}
    An element $\mu \in \R^{n+1}$ is called a \define{weight} if it lies in the positive cone of the root lattice, that is, $\mu = \sum_{i=1}^n z_i \alpha_i$ for $z_i \in \mathbb{Z}_{\geq 0}$.
\end{definition}

Although weights are typically defined more broadly, this definition is sufficient for our purposes.

\begin{definition}
    The \define{Kostant partition function} $K$ is a function that counts the number of ways a weight $\mu$ can be written as a non-negative integral linear combination of positive roots. 
\end{definition}
The Kostant partition function was originally used to establish a formula for the multiplicity of a weight in an irreducible representation of a complex semisimple Lie algebra \cite{kostant1958formula}, but appears in various combinatorial settings, such as in the study of juggling sequences \cite{MagicJuggling} and flow polytopes \cite{meszaros2015flow}.

\begin{example}
The weight $\mu = \alpha_1 + 2\alpha_2 + \alpha_3$ can be written in 5 distinct ways (with each chosen positive root enclosed in parentheses): 
\begin{align*}
    \mu & = (\alpha_1 + \alpha_2+\alpha_3) + (\alpha_2) \\
        &=  (\alpha_1+\alpha_2) +(\alpha_2) +(\alpha_3)\\
        &=  (\alpha_{1}) + (\alpha_2) + (\alpha_2+\alpha_3) \\
        &=  (\alpha_1+\alpha_2) + (\alpha_2+\alpha_3)\\
        &=  (\alpha_{1}) + (\alpha_2) + (\alpha_2) + (\alpha_3)\\
\end{align*}
Therefore, $K(\mu)=5$.
\end{example}

We can now prove \Cref{thm:barc_counting_is_kostant}.

\barcCountingKostant*
\begin{proof}
First, we note that the bijection between indecomposable $A_n$ quiver representations and positive roots in the corresponding root system, mentioned as Part 2 of Gabriel's Theorem, is given by the (Hilbert) dimension function $\dim$.
More precisely, the interval representation $\mathbbm{k}[i,j)$ is sent to the dimension vector 
\[(0, \ldots, 0, \underbrace{1, 1, \ldots, 1}_{[i,j-1]}, 0, \ldots, 0)\]
in the simple root basis, that is, the positive root $\alpha_i + \ldots + \alpha_{j-1}= \mathbf{e}_i - \mathbf{e}_{j}$.
Since we can identify each positive root $\mathbf{e}_i - \mathbf{e}_j$ with the interval $[i,j)$, the number of ways the weight \[\mu=\sum_{i=1}^{n} \beta_i \alpha_i\] can be partitioned into a sum of positive roots is exactly equal to the number of ways $\beta\colon [n] \to \mathbb{Z}_{\geq 0}$ can be written as a sum of indicator functions supported on intervals.
\end{proof}



\subsection{A Recursive Formulation}\label{sec_recursive}

Most modern approaches to evaluating $K(\mu)$ involve modeling the problem as counting the number of integer lattice points of a polytope defined as a function of $\mu$.
The most famous technique for counting integer lattice points is Barvinok's 1994 algorithm \cite{barvinok1994}.
Lattice point counting can be carried out with \textbf{Sage} \cite{sagecalculation}, or more specialized software like \textbf{barvinok} \cite{barvinok2004implementation} or \textbf{LattE} \cite{Latte2004}. 
Since the polytopes that encode the Kostant partition function are flow polytopes, more specialized computations are possible \cite{baldoni2008flow,meszaros2015flow}.
For an overview of  methods in computing the Kostant partition function, see \cite{ModernComputation}.
For the remainder of this section, we will present and explain a recursive method of evaluating the Kostant partition function, applicable to a type $A_n$ Lie algebra.
A central technique in this method is identifying Young diagrams that ``fit under'' a Betti curve.

Let $X=(x_1, x_2, \ldots, x_{n})$ and $Y=(y_1, y_2, \ldots, y_{n})$ be sequences of non-negative integers. We say $Y\sqsubset X$ if
\begin{enumerate}
    \item $Y$ is non-increasing
    \item $y_1=x_1$
    \item $y_i\leq x_i$ for all $i\neq 1$
\end{enumerate}
Let $X+Y$ and $X-Y$ denote the coordinate-wise sum and difference of finite sequences, respectively.
\begin{theorem}\cite[Equation 4.21]{SchmidtBincer}
    Let $\beta$ be a Betti curve. The following recursive formula holds. $$ |\Barc(\beta)| = \sum_{Y\sqsubset \beta} |\Barc(\beta - Y)|$$
\end{theorem}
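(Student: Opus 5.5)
Our plan is to give a bijection
\[
\Barc(\beta) \;\longleftrightarrow\; \bigsqcup_{Y\sqsubset\beta} \Barc(\beta-Y),
\]
which amounts to splitting off, from each barcode, the sub-multiset of bars that contain the index $1$. (We take $\beta_1\ge 1$ or allow $Y=0$ when $\beta_1=0$; if $\beta_1=0$, the only $Y\sqsubset\beta$ is $Y=0$, and we must interpret things so as to avoid the tautology. This is discussed at the end.)

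\textbf{The forward map.} Let $B\in\Barc(\beta)$. Exactly $\beta_1$ bars of $B$, counted with multiplicity, contain $1$, and each has the form $[1,j)$. Let $B_1$ be this sub-multiset and set $Y=\varphi(B_1)$. Then:
\begin{itemize}
\item $Y$ is non-increasing, since the number of bars $[1,j)$ containing $i$ can only drop as $i$ grows;
\item $y_1=\beta_1$;
\item $y_i\le\beta_i$ for all $i$, since $B_1\subseteq B$.
\end{itemize}
Hence $Y\sqsubset\beta$. The remaining bars $B\setminus B_1$ have Betti curve $\beta-Y$, and that curve vanishes at index $1$. Send $B$ to the pair $(Y, B\setminus B_1)$.

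\textbf{Key lemma.} Every non-increasing sequence $Y$ with $Y=\varphi(C)$ comes from a unique multiset $C$ of bars all starting at $1$. Indeed, $C$ must contain exactly $y_{j-1}-y_j$ copies of $[1,j)$ for $2\le j\le n+1$, where we set $y_{n+1}=0$. Non-increasingness makes these multiplicities nonnegative, and the resulting multiset has Betti curve $Y$.

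\textbf{The inverse map.} Given $Y\sqsubset\beta$ and $B'\in\Barc(\beta-Y)$, every bar of $B'$ avoids index $1$, because $(\beta-Y)_1=0$. Adjoin the multiset $C$ from the key lemma to obtain $B'\cup C\in\Barc(\beta)$. The bars of $B'\cup C$ containing $1$ are exactly those of $C$, so this map inverts the forward map.

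\textbf{The obstacle.} The only subtle point is the base case and the meaning of $\Barc(\beta-Y)$, since $\beta-Y$ is really a Betti curve on $\{2,\dots,n\}$. We treat the recursion as being on the shorter curve $(\beta_2-y_2,\dots,\beta_n-y_n)$. The count is unaffected, because bars avoiding $1$ are the same as barcodes on $[2,n+1)$. The recursion terminates at the empty curve, which has exactly one (empty) barcode.

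If $\beta_1=0$, the formula with $Y=0$ is literally a tautology under the paper's convention. With the shift interpretation it is correct, and the bijection argument above covers this case verbatim.
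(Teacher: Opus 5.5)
Your proof is correct and is essentially the paper's argument: the paper likewise groups the barcodes in $\Barc(\beta)$ by their multiset $\overline{Y}$ of bars starting at $1$, uses that $\overline{Y}$ is the unique such multiset with $\varphi(\overline{Y})=Y$, and identifies each group with $\Barc(\beta-Y)$ via $X\mapsto X-\overline{Y}$, with inverse $Z\mapsto Z\sqcup\overline{Y}$. Your concern about the case $\beta_1=0$ does not affect the identity itself, which then holds trivially when $\Barc(\beta-Y)\subseteq\Barc(n)$ is read literally; the shift to length $n-1$ only matters if you want the recursion to terminate.
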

Note that each $\beta - Y$ is a sequence that begins with a zero, so it can be regarded as a Betti curve of length $n-1$.
The notation we use is different from that used by Schmidt and Bincer in their article \cite{SchmidtBincer}.
Our equation is demonstrated by Equation 4.21 in their paper, with conditions 2 and 3 illuminated by their Equations 4.20 and 4.22, respectively.
Just as we reduce a Betti curve of size $n$ to a sum over Betti curves of size $n-1$, they reduce a partition function of a weight in a type $A_n$ Lie algebra to a sum of partition functions over weights in a type $A_{n-1}$ Lie algebra.
For convenience, we give a short, self-contained proof of the theorem.

\begin{proof}
    Recall the function $\varphi\colon \Barc(n) \to \Betti(n)$ that associates a Betti curve to a barcode, defined in the introduction.
    Define the equivalence relation $\sim$ on elements of $\varphi^{-1}(\beta)$ by $\barc \sim \barc'$ if $\barc$ and $\barc'$ have the same multiset $\mathcal Y$ of intervals that begin at $1$.
    We write $\Theta_{\mathcal Y}$ for the corresponding equivalence class.
    For $Y\sqsubset \beta$, there is a unique multiset of intervals $\overline{Y}$ such that all the intervals start at 1 and $\varphi(\overline{Y})=Y$.
    It follows that there is a bijection between equivalence classes of $\sim$ and the set of $Y$ with $Y\sqsubset \beta$.
    Thus, to prove the lemma, it suffices to construct a bijection $\gamma\colon \Theta_{\overline{Y}}\to \varphi^{-1}(\beta - Y)$ for each $Y\sqsubset \beta$.
    Let $\gamma(X) = X - \overline{Y}$.
    The inverse of $\gamma$ is clearly given by $\gamma^{-1}(Z) = Z\sqcup\overline{Y}$, assuming both are well-defined.
    The following computation shows that $\gamma$ is well-defined:
    \[
    \varphi(X - \overline{Y})=\varphi(X) - \varphi(\overline{Y}) = \beta - Y.
    \]
    For $Z$ in $\varphi^{-1}(\beta - Y)$, we have
    \[
    \varphi(Z\sqcup\overline{Y})=\varphi(Z) + \varphi(\overline{Y}) = \beta - Y + Y = \beta,
    \]
    so $\gamma^{-1}(Z)\in \varphi^{-1}(\beta)$.
    Since $Z$ has no intervals starting at $1$, we get $\gamma^{-1}(Z) \in \Theta_{\overline{Y}}$.
\end{proof}

Intuitively, a sequence $Y\sqsubset\beta$ arises from a multiset of intervals beginning at 1 from a barcode which has Betti curve $\beta$.
Since there are exactly $\varphi^{-1}(\beta-Y)$ many barcodes in $\varphi^{-1}(\beta)$ which have this property, the formula counts every element of the this set exactly once.
We can view the $\sqsubset$ condition pictorially: $Y\sqsubset \beta$ when $Y$ is a Young diagram that ``fits inside of'' $\beta$ such that the number of cells in the first columns of $Y$ and $\beta$ is the same.

\begin{figure}[h]
    \centering
    \captionsetup{width=0.8\textwidth}
    \includegraphics[width=0.8\textwidth]{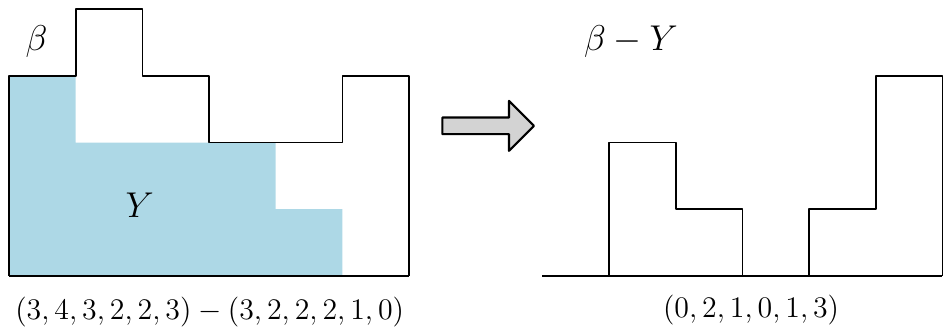}
    \caption{An example of a Betti curve $\beta$ containing a sequence $Y\sqsubset \beta$ (left), and the resulting Betti curve $\beta - Y$ (right).}
    \label{fig:placeholder}
\end{figure}

\subsection{Juggling Sequences}\label{sec_juggling}

The mathematical study of juggling dates back to at least the 1980s, when Claude Shannon wrote ``Scientific Aspects of Juggling,'' which was later published as part of this collected works \cite{shannon1993scientific}. 
Shannon's paper is filled with ideas, commenting on historical, physical, mathematical, and robotic aspects of juggling.
However, the recent focus of mathematical juggling, largely stemming from \cite{Buhler1994}, aims to enumerate different ways to theoretically juggle with various restrictions in place \cite{PrimeJuggling,Multiplex,Chung,polster2003mathematics}. 
In the study of juggling, one considers \emph{juggling states} (or \emph{landing schedules}), vectors whose entries represent (a discrete set of) times at which a number of balls will reach the juggler's hand.
A \emph{juggling sequence} is a sequence of juggling states, which represents the act of juggling over time. Consecutive states in a juggling sequence must have a so-called \emph{valid transition} between them, whose definition varies depending on the juggling variation being considered.
In the following, we adhere to the magic juggling constraints given by \cite{MagicJuggling}.

\begin{definition}
We use the notation $\langle s_1,s_2, \dots, s_h\rangle$ for the infinite-length zero-padded vector $(s_1,s_2, \dots, s_h, 0,0,\dots)$.
A \define{juggling state} is a vector $\textbf{s}=\langle s_1,s_2, \dots, s_h\rangle$, where $s_i\in \mathbb{Z}$ and such that $\sum\limits_{j=1}^h s_j=m\geq 0$. A \define{magic juggling sequence} of length $n$ is an ordered list $S=(\mathbf{s}^0, \mathbf{s}^1, \dots, \mathbf{s}^n)$ of juggling states with valid transitions between consecutive states.
A \define{valid transition} from state $\mathbf{s}^{i-1}=\langle s_1, s_2, \dots, s_h \rangle $ to state $\mathbf{s}^{i}$ occurs when $\mathbf{s}^i=\langle s_2 +t_1 , s_3+ t_2, \dots, s_h+t_{h-1}, t_h, \dots, t_{h'}\rangle$, where each $t_j$ is a non-negative integer such that $\sum_{j=1}^{h'} t_j =s_1$. 
We say $t_j$ is the number of throws at time $i$ to height $j$.
Note that the collection of $t_j$'s are always defined relative to a transition from $\mathbf{s}^{i-1}$ to $\mathbf{s}^{i}$, so the time-index $i$ is implied.
\end{definition}

\begin{figure}
    \centering
    \captionsetup{width=.87\textwidth}
    \includegraphics[width=0.65\textwidth]{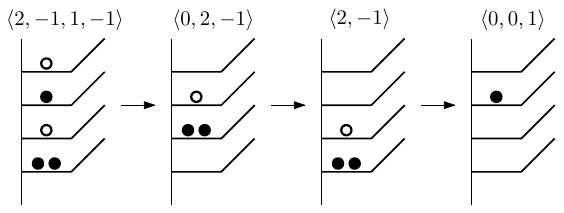}
    \caption{A magic juggling sequence represented with a ``bucket'' diagram. At each consecutive state in the juggling sequence, every ball descends by 1 bucket, except for those at the bottom bucket, which are redistributed. Magic balls appear in white and indicate a negative entry, while ordinary balls appear in black and indicate a positive entry. When a magic ball and ordinary ball occupy the same bucket, they cancel out. In this example, the two balls at time 1 are thrown to heights 1 and 2, and the former cancels with a magic ball. No balls are thrown at time 2. At time 3, the two balls at the bottom level are thrown to heights 1 and 3, and the former cancels with the one remaining magic ball.}
    \label{fig:JS_solo_example}
\end{figure}

We make the following convenient observation that completely characterizes valid magic juggling sequences.
\begin{observation}
\label{obs_valid_sequence}

Let $S=(\mathbf{s}^0, \mathbf{s}^1, \dots, \mathbf{s}^n)$ be a sequence of juggling states, and write $\mathbf{s}^i = \langle s^i_1,\dots,s^i_{h} \rangle$, and $s^i_{k} = 0$ for all $k>h$ and $k<1$.
Note that we can choose an appropriate $h$ for all states by padding with zeros.
Then $S$ is a valid magic juggling sequence if and only if
\begin{enumerate}[(i)]
\item for all $0\leq i,j\leq n$, we have $\sum_{k=1}^{h} s^i_k = \sum_{k=1}^{h} s^j_k$, and
\item for all $1\leq i\leq n$ and $1\leq k$, we have $s^{i-1}_{k+1} \leq s^{i}_{k}$.
\end{enumerate}
\end{observation}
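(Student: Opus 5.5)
We prove both directions directly from the definition of a valid transition. For each $1\le i\le n$, set $t^{(i)}_j := s^{i}_j - s^{i-1}_{j+1}$ for $j\ge 1$. The definition says that $\mathbf{s}^{i-1}\to\mathbf{s}^i$ is valid exactly when the componentwise identity $s^i_j = s^{i-1}_{j+1} + t_j$ holds with all $t_j\ge 0$ and $\sum_j t_j = s^{i-1}_1$. The first step is to check that this characterization is faithful to the definition. Writing the new state as $\langle s_2+t_1,\dots,s_h+t_{h-1},t_h,\dots,t_{h'}\rangle$ is the same as the identity $s^i_j = s^{i-1}_{j+1}+t_j$ for all $j$, once the convention $s^{i-1}_k=0$ for $k>h$ is used. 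Moreover, the $t_j$ are uniquely determined by the two states, namely $t_j = s^i_j - s^{i-1}_{j+1}$. So the only content is the sign conditions and the sum condition.

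For the forward direction, assume $S$ is a valid magic juggling sequence. Nonnegativity of the $t_j$ at step $i$ is exactly condition (ii): $s^{i-1}_{k+1}\le s^i_k$. For (i), we sum the transition identity over $j$:
\[
\sum_j s^i_j = \sum_{j\ge 2} s^{i-1}_j + \sum_j t_j = \sum_{j\ge 2} s^{i-1}_j + s^{i-1}_1 = \sum_j s^{i-1}_j .
\]
So the total is preserved at every step, and by induction all $\sum_k s^i_k$ agree.

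For the converse, assume (i) and (ii), and define $t_j$ as above. Condition (ii) gives $t_j\ge 0$. Only finitely many $t_j$ are nonzero, since all states are zero beyond index $h$. Rearranging the same telescoping computation, (i) forces $\sum_j t_j = \sum_j s^i_j - \sum_{j\ge2}s^{i-1}_j = s^{i-1}_1$. This is precisely the defining condition of a valid transition. Each $\mathbf{s}^i$ is a juggling state, since its entries are integers summing to the common value $m\ge 0$; the definition requires this, and we take it as part of the hypothesis on $S$.

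The main (mild) obstacle is bookkeeping. We must make the finite-length vectors $\langle\cdot\rangle$ and the padding conventions consistent, with a common $h$ large enough (say $h'$ for all states), so that the sums are over the same finite range. We must also justify interchanging the reindexed sum $\sum_{j} s^{i-1}_{j+1}=\sum_{j\ge2}s^{i-1}_j$. Once a uniform $h$ is fixed, everything is a finite telescoping identity.
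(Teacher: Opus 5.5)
Your proposal is correct. It is exactly the direct unwinding of the definition that the paper has in mind: the paper states this as an observation without writing out a proof. Your key points are that the throws $t_j = s^i_j - s^{i-1}_{j+1}$ are forced by the two states, so their nonnegativity is condition (ii), and that the shift identity $\sum_j s^i_j = \sum_{j\ge 2} s^{i-1}_j + \sum_j t_j$ makes $\sum_j t_j = s^{i-1}_1$ equivalent to conservation of the total in condition (i).
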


\begin{definition}
   We let $\JS(\textbf{a},\textbf{b},n)$ denote the set of all magic juggling sequences of length $n$ with initial state \textbf{a} and terminal state \textbf{b}.
   We let $\JS(\leq h,\textbf{b},n)$ denote the set of all magic juggling sequences of length $n$ with terminal state \textbf{b}, but whose initial state is any state of the form $\langle s_1,\dots,s_{h} \rangle$; that is, the initial state is zero after the first $h$ entries. 
\end{definition}

The authors of \cite{MagicJuggling} provide the following result relating magic juggling sequences and the Kostant partition function. 
\begin{theorem}[{\cite[Theorem 3.8]{MagicJuggling}}]\label{thm:magic_juggling_KPF}
    Let $\mu$ be a weight of the Lie algebra of type $A_n$ and let $(\mu_1, \ldots, \mu_{n+1})$ be its standard basis vector representation. Then 
    \[ K(\mu) = |\JS(\langle \mu_1, \ldots, \mu_n \rangle, \langle \mu_1 + \ldots + \mu_n \rangle, n)|. \]
\end{theorem}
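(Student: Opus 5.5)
The plan is a direct bijection between partitions of $\mu$ into positive roots and magic juggling sequences. The dictionary is the one in the table above: the root $\mathbf{e}_i-\mathbf{e}_j$ becomes a throw at time $i$ to height $j-i$.

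First I write $\mu=\sum_i z_i\alpha_i$ in the standard basis:
\[
\mu_1=z_1,\qquad \mu_i=z_i-z_{i-1}\ (2\le i\le n),\qquad \mu_{n+1}=-z_n .
\]
This gives $\mu_1+\cdots+\mu_n=z_n\ge 0$, so both the initial and terminal vectors are legitimate juggling states with the same ball count $z_n$. This matches condition (i) of \Cref{obs_valid_sequence}.

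A partition of $\mu$ is a choice of multiplicities $m_{ij}\ge 0$ for $1\le i<j\le n+1$ with $\sum m_{ij}(\mathbf{e}_i-\mathbf{e}_j)=\mu$. Coordinatewise this reads
\[
\mu_i=\sum_{j>i}m_{ij}-\sum_{k<i}m_{ki}\qquad(1\le i\le n+1).
\]
Given such $m$, I define the sequence by letting the throws at time $i$ be $t_{j-i}=m_{ij}$, for $i=1,\dots,n$. I then run the transitions starting from $\mathbf{s}^0=\langle\mu_1,\dots,\mu_n\rangle$.

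By induction on $i$, the state $\mathbf{s}^{i-1}$ has $k$-th entry
\[
\mu_{i-1+k}+\sum_{a<i}m_{a,\,i-1+k},
\]
where $\mu_\ell$ is taken to be $0$ for $\ell>n$. In words, this is the initial landing schedule plus all balls already thrown to land at time $i-1+k$.

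The validity condition at time $i$ is that the bottom entry $\mu_i+\sum_{a<i}m_{ai}$ equals the total number of throws $\sum_{j>i}m_{ij}$. This is exactly the $i$-th coordinate equation above. After time $n$, the state has first entry $\sum_{a\le n}m_{a,n+1}$, which equals $z_n$ by the $(n+1)$-st coordinate equation, and zeros elsewhere because no root lands beyond $n+1$. So the terminal state is $\langle z_n\rangle$, as required.

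For the inverse map, I observe that the throws $t_j$ at each time are uniquely recovered from consecutive states. Indeed $t_j=s^i_j-s^{i-1}_{j+1}$, which is nonnegative by condition (ii) of \Cref{obs_valid_sequence}. I set $m_{i,i+j}=t_j$.

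The main point to check is that the terminal condition $\mathbf{s}^n=\langle z_n\rangle$ forces every throw to land at time at most $n+1$, i.e. $i+j\le n+1$, so each throw really is a positive root of $A_n$. I would argue this via monotonicity. A ball thrown at time $i$ to time $\ell>n+1$ contributes $+1$ to entry $\ell-i$ of $\mathbf{s}^i$. Since $\mathbf{s}^0$ vanishes beyond position $n$, nothing can cancel it, so the corresponding entry of $\mathbf{s}^n$ stays positive, contradicting $\mathbf{s}^n=\langle z_n\rangle$.

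Reading the validity conditions backwards then shows that the recovered $m_{ij}$ satisfy all $n+1$ coordinate equations. Hence the two maps are mutually inverse, and $K(\mu)=|\JS(\langle\mu_1,\dots,\mu_n\rangle,\langle z_n\rangle,n)|$.
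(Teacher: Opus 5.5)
Your proof is correct. The inductive formula for $\mathbf{s}^{i-1}$ is right, and so is the identification of the validity condition at time $i$ with the $i$-th coordinate equation. The monotonicity argument, showing that the terminal state $\langle z_n\rangle$ forbids any throw landing after time $n+1$, is also sound.

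The paper gives no proof of this statement. It imports it from \cite{MagicJuggling} and uses it in the opposite direction, combining it with \Cref{thm:bijection} to re-derive \Cref{thm:barc_counting_is_kostant} as \Cref{cor_1}.

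Your bijection is in fact the composite of three of the paper's own ingredients: the identification of $[i,j)$ with $\mathbf{e}_i-\mathbf{e}_j$ from the proof of \Cref{thm:barc_counting_is_kostant}, the map $\sigma$, and the deletion of the entry $-\beta_n$ in the proof of \Cref{cor_1}. Concretely, let $m_{ij}$ be the multiplicity of $[i,j)$ in a barcode $\barc$, so $z_i=\beta_i$. Your entry $\mu_{i-1+k}+\sum_{a<i}m_{a,i-1+k}$ counts bars born at time $i-1+k$ minus bars dying at that time that were born at or after time $i$. This is exactly $\delta(\barc^{i-1})_k$. The only discrepancy is at the position tracking time $n+1$, where your entry exceeds the paper's by $z_n=\beta_n$. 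That shift is what converts the paper's endpoints $\langle\delta(\beta)\rangle$ and $\langle 0\rangle$ into your $\langle\mu_1,\dots,\mu_n\rangle$ and $\langle z_n\rangle$.

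What your direct route buys is self-containedness. It shows that \Cref{thm:magic_juggling_KPF} follows from the paper's elementary results with no appeal to \cite{MagicJuggling}. It also makes explicit the one nontrivial check, namely that the terminal state forces every throw to be a genuine positive root of $A_n$. The paper's surjectivity argument for $\sigma_\beta$ does not spell this out when it builds $\barc_T$. The paper's truncation picture, in turn, gives each intermediate state a conceptual meaning: the differential of the barcode of bars not yet born. It also keeps the normalization with terminal state $\langle 0\rangle$, which is natural on the TDA side.

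A cosmetic point: your convention $\mu_\ell=0$ for $\ell>n$ clashes with the genuine value $\mu_{n+1}=-z_n$, which you also use. You apply each correctly, but naming the padded initial state separately would avoid confusion.
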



\section{The Bijection: Barcodes and Juggling Sequences}\label{sec:bijection}

\begin{definition}
    Let $\beta$ be a Betti curve. The \define{differential} of $\beta$, denoted $\delta(\beta)\colon [n+1] \to \Z$, is the finite sequence defined via 
    \[
    \delta(\beta) = (\beta_1, \beta_2-\beta_1, \dots, \beta_{n}-\beta_{n-1}, -\beta_{n}).
    \]
    For a barcode $\barc$ with associated Betti curve $\varphi(\barc)=\beta$, we will sometimes abuse our differential notation and write $\delta(\barc)$ instead of $\delta(\varphi(\barc))$.
\end{definition}

\begin{definition}
Given a barcode $\barc$ and $i\in \N$, define the \define{$i$-truncation} of $\barc$ as the barcode $\tau_i(\barc)$ with a bar $[a-i,b-i)$ for every bar $[a,b)$ in $\barc$ with $a\geq i+1$.
\end{definition}

Said simply, to construct $\tau_i(\barc)$, we take the bars in $\barc$, remove those whose left endpoint is at most $i$, and shift the remaining bars $i$ steps to the left.

\begin{definition}
For $0\leq i\leq n$, we write $\barc^i = \tau_i(\barc)$.
We define the \define{juggling map}
\[
\sigma\colon \Barc(n) \to \JS(\leq n+1,\langle 0 \rangle, n)
\]
that sends a barcode $B$ to its corresponding magic juggling sequence
\[
\sigma(\barc)\coloneqq(\langle \delta(\barc^0)\rangle, \dots, \langle\delta(\barc^n)\rangle).
\]
Proving that this is well-defined is part of \cref{thm:bijection}.
\end{definition}

\begin{figure}[h]
    \centering
    \includegraphics[width=0.65\textwidth]{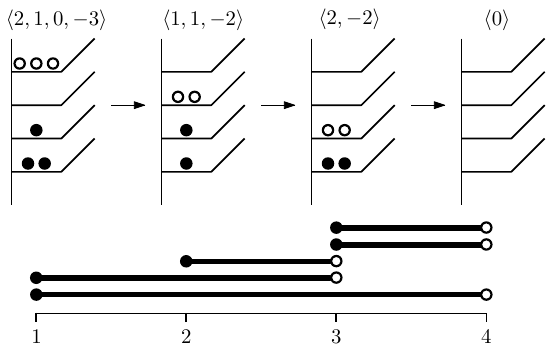}
    \caption{A barcode and its associated juggling sequence under the map $\sigma$.}
    \label{fig:JS_vs_barcode}
\end{figure}

\begin{lemma}
\label{lemma_bijection}
Let $\barc$ be a barcode with Betti curve $\beta=(\beta_1, \beta_2, \dots, \beta_{n})$, and let $b_{i,j}$ denote the multiplicity of the interval $[i,j)$ in $\barc$.
Write $\delta(\barc^i)=\langle s^i_1, s^i_2, \dots, s^i_{n+1}\rangle$, and let $s^i_k=0$ for $k<1$ and $k>n+1$. The following three statements hold:
\begin{enumerate}[(i)]
\item $\delta(\barc^0) = \delta(\beta)$
\item $\barc^n$ is the empty barcode, so $\delta(\barc^n) =  0$
\item $s^{i}_{j} - s^{i-1}_{j+1} = b_{i,i+j}$ for $i\in \{1, \ldots, n\}$ and all $j\geq 0$. 
\end{enumerate}
\end{lemma}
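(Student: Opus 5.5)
Parts (i) and (ii) follow directly from the definitions; (iii) is a short telescoping computation once we express $\delta(\barc^i)$ in terms of births and deaths.

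\emph{Part (i).} We have $\barc^0=\tau_0(\barc)$, which keeps every bar $[a,b)$ with $a\geq 1$ and shifts it by $0$. Thus $\barc^0=\barc$, and $\delta(\barc^0)=\delta(\varphi(\barc))=\delta(\beta)$.

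\emph{Part (ii).} Every bar $[a,b)\subseteq[n]$ has $a\leq n$, so $\tau_n$ removes all of them. Hence $\barc^n$ is empty, its Betti curve is identically zero, and its differential is $0$.

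\emph{Part (iii): a birth--death formula for the differential.} For any barcode $C$ with Betti curve $\gamma$, the value $\gamma_k$ counts the bars $[a,b)$ of $C$ with $a\leq k<b$. Taking differences with $\gamma_0=0$ and $\gamma_{n+1}=0$ (so the last entry $-\gamma_n$ fits the same pattern) gives
\[
\delta(\gamma)_k=\#\{\text{bars born at }k\}-\#\{\text{bars dying at }k\},
\]
where a bar $[a,b)$ is born at $a$ and dies at $b$, counted with multiplicity.

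\emph{Applying this to $\barc^i$.} The bars of $\barc^i$ are the $[a-i,b-i)$ with $[a,b)$ in $\barc$ and $a\geq i+1$. So
\[
s^i_k=\sum_{b} b_{i+k,\,b}\;-\sum_{a\geq i+1} b_{a,\,i+k}.
\]
Both sums vanish automatically when $k<1$ or $k>n+1$, which matches the zero convention. Shifting the index, and keeping in mind that the truncation condition becomes $a\geq i$,
\[
s^{i-1}_{j+1}=\sum_{b} b_{i+j,\,b}\;-\sum_{a\geq i} b_{a,\,i+j}.
\]

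\emph{Subtracting.} The birth terms are identical and cancel. The death sums differ only in the term $a=i$, so
\[
s^i_j-s^{i-1}_{j+1}=b_{i,\,i+j}.
\]
This is nonnegative, as required.

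\emph{The edge case $j=0$.} Here $s^i_0=0$ by convention, while the formula gives $s^{i-1}_1=\sum_b b_{i,b}-\sum_{a\geq i}b_{a,i}$. Intervals are nonempty, so $b_{a,i}=0$ whenever $a\geq i$, and this reduces to $\sum_b b_{i,b}$. The claimed identity would then read $-\sum_b b_{i,b}=b_{i,i}=0$, which fails in general. So for $j=0$ I expect the correct reading to be the conservation statement instead: the balls caught at time $i$ number $s^{i-1}_1=\sum_j b_{i,i+j}$, and they are redistributed as the throws. This should be checked against how the statement is used.

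The main obstacle is purely index bookkeeping: getting the boundary conventions right, namely the death at $n+1$, the padding at $k=n+1$, and the case $j=0$. The body of (iii) is just cancellation between the two sums.
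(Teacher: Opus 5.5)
Your argument is the paper's argument with the bookkeeping written out. The paper also tracks how each bar contributes to $s^i_j$ and $s^{i-1}_{j+1}$: a birth at $i+j$ counts in both, and a death at $i+j$ counts in $s^{i-1}_{j+1}$ if the bar is born at $a\ge i$ and in $s^i_j$ if $a\ge i+1$. Your explicit birth--death sums establish (i), (ii), and (iii) for every $j\ge 1$.

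Your objection at $j=0$ is correct, and the paper's proof overlooks it. Its first bullet fails for $j=0$: a bar born at $i$ is deleted by $\tau_i$, and $s^i_0=0$ by convention. For example, $n=1$, $B=\{[1,2)\}$ gives $s^1_0-s^0_1=0-1=-1\neq 0=b_{1,1}$. The statement should read $j\ge 1$, which is all that is used later. Observation~(ii) only concerns $k\ge 1$, and in the surjectivity part of the theorem $[i,i+j)$ is an interval only for $j\ge 1$. Your identity $s^{i-1}_1=\sum_{j\ge 1} b_{i,i+j}$ is the correct replacement at $j=0$.

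One slip to fix: your remark that both sums in $s^i_k=\sum_b b_{i+k,b}-\sum_{a\ge i+1} b_{a,i+k}$ vanish for $k<1$ is false. The birth sum counts bars of $B$ born at $i+k$, which is generally nonzero for $1-i\le k\le 0$. This happens because you dropped the truncation condition $i+k\ge i+1$ from the birth term, so the formula is valid only for $k\ge 1$. The slip is harmless, since you only apply the formula with $k\ge 1$ and use the convention for $s^i_0$. But plugging it in at $k=0$ would produce the spurious identity $s^i_0-s^{i-1}_1=0=b_{i,i}$, which is essentially the oversight in the paper's first bullet.
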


\begin{proof}
Observations (i) and (ii) are clear. To see (iii), notice that an interval in $\barc$ contributes
\begin{itemize}
\item positively to both $s^{i}_{j}$ and $s^{i-1}_{j+1}$ if it starts at $i+j$,
\item negatively to $s^{i-1}_{j+1}$ if it is of the form $[a,i+j)$ with $a\geq i$, and 
\item negatively to $s^{i}_{j}$ if it is of the form $[a,i+j)$ with $a\geq i+1$.
\end{itemize}
Otherwise, it does not contribute to either.
Thus, $s^{i}_{j} - s^{i-1}_{j+1}$ is exactly the number of bars in $\barc$ of the form $[i,i+j)$.
\end{proof}

\begin{figure}[h]
    \centering
    \includegraphics[width=0.5\textwidth]{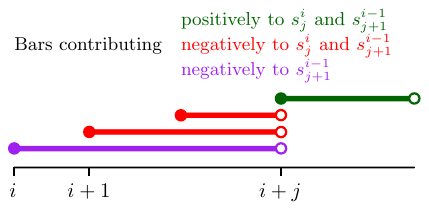}
    \caption{An illustration of how various bars contribute to $s^{i}_{j}$ and $s^{i-1}_{j+1}$.}
    \label{fig:barcode_state_contribution}
\end{figure}

\bijectionThm*
The theorem can be summarized by the commutative diagram below.
\[\begin{tikzcd}
	{\mathbf{Betti}(n)} & {\mathbf{Barc}(n)} & {\JS(\leq n+1, \langle 0 \rangle, n)} \\
	{\{\beta\}} & {\mathbf{Barc}(\beta)} & {\JS(\langle\delta(\beta)\rangle, \langle 0 \rangle, n)}
	\arrow["\varphi"', from=1-2, to=1-1]
	\arrow["\begin{array}{c} \sim \\ \sigma \end{array}"{marking, allow upside down}, from=1-2, to=1-3]
	\arrow[hook, from=2-1, to=1-1]
	\arrow[hook, from=2-2, to=1-2]
	\arrow["\varphi"', from=2-2, to=2-1]
	\arrow["\begin{array}{c} \sim \\ \sigma_{\beta} \end{array}"{marking, allow upside down}, from=2-2, to=2-3]
	\arrow[hook, from=2-3, to=1-3]
\end{tikzcd}\]

\begin{proof}
We first show that $\sigma_\beta$ is a bijection.
Let $\barc$ be a barcode with Betti curve $\beta$.
First, we show that $\sigma(\barc)$ is a valid magic juggling sequence.
For any $\barc$ and any $i$, \[\sum_j \delta(\barc^i)_j = 0, \]
so \cref{obs_valid_sequence} (i) is satisfied.
Moreover, \cref{obs_valid_sequence} (ii) is exactly \cref{lemma_bijection} (iii), so $\sigma(\barc)$ is a valid magic juggling sequence. 
Thus, by \cref{lemma_bijection} (i) and (ii), $\sigma(\barc)$ is a well-defined element of $\JS(\langle \delta(\beta) \rangle,\langle 0 \rangle, n)$.
Since $\delta(\beta)$ has length $n+1$, this shows that both $\sigma$ and $\sigma_\beta$ are well defined.

\cref{lemma_bijection} (iii) guarantees injectivity of $\sigma$, since we can read off $\barc$ from $\sigma(\barc)$.

To show surjectivity of $\sigma_\beta$, for each $T=(\mathbf{t}^0, \mathbf{t}^1, \dots, \mathbf{t}^n)\in \JS(\langle \delta(\beta) \rangle,\langle 0 \rangle, n)$, we find a barcode with Betti curve $\beta$ whose image under $\sigma$ is $T$. Write $\mathbf{t}^i = \langle t_1^i,t_2^i, \dots, t_h^i\rangle$ and let $\barc_T$ be the multiset of intervals where the multiplicity of the interval $[i,i+j)$ is $t^{i}_{j} - t^{i-1}_{j+1}$ (which is nonnegative by \cref{obs_valid_sequence} (ii)) for each $i\in \{1, \ldots, n\}$ and each $j\geq 0$.
The barcode $\barc_T$ has Betti curve $\beta$, since $\langle \delta(\barc_T) \rangle= \langle \delta(\barc_T^0) \rangle = \sigma(\barc_T)^0 = \langle \delta(\beta)\rangle$.
Write $\sigma(\barc_T)=(\mathbf{s}^0, \mathbf{s}^1, \dots, \mathbf{s}^n)$ and $\mathbf{s}^i = \langle s^i_1, s^i_2, \dots, s^i_{n+1}\rangle$.
We have $\sigma(\barc_T)=T$:
By \cref{lemma_bijection} (i), $\mathbf{s}^0 = \delta(\beta) = \mathbf{t}^0$, and by \cref{lemma_bijection} (iii), $s^{i}_{j} - s^{i-1}_{j+1} = b_{i,i+j} = t^{i}_{j} - t^{i-1}_{j+1}$ for all relevant $i$ and $j$. Since $\mathbf{s}^i$ is determined by $\mathbf{s}^{i-1}$ and the values $s^{i}_{j} - s^{i-1}_{j+1}$ for different $j$, and similarly for $\mathbf{t}^i$, this implies $\mathbf{s}^i = \mathbf{t}^i$ for all $i$.
Thus, $\sigma(\barc_T)=T$.

We have that $\sigma$ is injective and that $\sigma_\beta$ is bijective for each $\beta$.
Therefore, it only remains to show that for every juggling state of the form $\textbf{a} = \langle s_1,\dots,s_{n+1} \rangle$ such that $\JS(\textbf{a},\langle 0 \rangle, n)\neq \emptyset$, we have $\textbf{a} = \langle \delta(\beta) \rangle$ for some Betti curve $\beta=(\beta_1, \beta_2, \dots, \beta_{n})$.
We let $\beta_i = \sum_{j=1}^i s_j$ for $1\leq i\leq n$, which gives $\textbf{a} = \langle \delta(\beta) \rangle$ if $s_{n+1} = -\beta_n$.
This is equivalent to $\sum_{j=1}^{n+1} s_j = 0$.
By the assumption that $\JS(\textbf{a},\langle 0 \rangle, n)\neq \emptyset$, there is a valid juggling sequence $S$ starting at $\textbf{a}$ and ending at $\langle 0 \rangle$.
Let $S^i_{\geq j} = \sum_{k=j}^{h} s^i_k$, using the same notation as in \cref{obs_valid_sequence}.
By \cref{obs_valid_sequence}(i), $S^i_{\geq 1} = 0$ for all $i$, so in particular, $\sum_{j=1}^{n+1} s_j = 0$.

Finally, we need that all the $\beta_i$ are nonnegative for them to form a valid Betti curve.
Let $S\in \JS(\textbf{a},\langle 0 \rangle, n)$, and let $S^i_{\geq j} = \sum_{k=j}^{h} s^i_k$, using the same notation $s^i_k$ as in \cref{obs_valid_sequence}.
By \cref{obs_valid_sequence}(ii), $S^{i-1}_{\geq j+1}\leq S^i_{\geq j}$ for $1\leq i\leq n$ and $j\geq 1$.
We have $S^n_{\geq j} = 0$ for all $j$, and we already saw that $S^i_{\geq 1} = 0$ for all $i$.
Thus, we get
\[
S^i_{\geq j}\leq S^{i+1}_{\geq j-1}\leq \dots \leq 0
\]
for all $i$ and $j$, and therefore
\[
\beta_i = \sum_{j=1}^i s_j = S^0_{\geq 1} - S^0_{\geq i+1} \geq 0.\qedhere
\]
\end{proof}

Using \Cref{thm:magic_juggling_KPF} and \Cref{thm:bijection}, the relationship between the Kostant partition function and the barcode counting problem can now be obtained as a corollary.
\begin{corollary}\label{cor_1}
    If $\mu= \sum\limits_{i=1}^{n} \beta_i \alpha_{i}$ is a weight of the Lie algebra of type $A_n$, then 
    \[|\mathbf{Barc}(\beta)| = K(\mu).\]
\end{corollary}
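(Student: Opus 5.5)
The plan is to combine \Cref{thm:bijection} with \Cref{thm:magic_juggling_KPF}, and to bridge the two with an explicit bijection between the juggling sets that appear in each.

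First, I will compute the standard-basis coordinates of $\mu$. Since $\alpha_i=\mathbf{e}_i-\mathbf{e}_{i+1}$,
\[
\mu=\sum_{i=1}^n\beta_i(\mathbf{e}_i-\mathbf{e}_{i+1})=\beta_1\mathbf{e}_1+\sum_{k=2}^{n}(\beta_k-\beta_{k-1})\mathbf{e}_k-\beta_n\mathbf{e}_{n+1}.
\]
So $(\mu_1,\dots,\mu_{n+1})=\delta(\beta)$ exactly. The first $n$ coordinates telescope to $\mu_1+\cdots+\mu_n=\beta_n$. By \Cref{thm:magic_juggling_KPF},
\[
K(\mu)=|\JS(\langle\mu_1,\dots,\mu_n\rangle,\langle\beta_n\rangle,n)|.
\]
By \Cref{thm:bijection},
\[
|\Barc(\beta)|=|\JS(\langle\mu_1,\dots,\mu_n,-\beta_n\rangle,\langle 0\rangle,n)|.
\]
It therefore suffices to biject these two sets of magic juggling sequences.

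The two sets differ only by $\beta_n$ magic balls that start at height $n+1$ and descend one level per step. They reach height $1$ exactly at time $n$, where they cancel the $\beta_n$ balls of the terminal state $\langle\beta_n\rangle$. Accordingly, I define
\[
\Psi\colon(\mathbf{s}^0,\dots,\mathbf{s}^n)\mapsto(\mathbf{s}^0-\beta_n\mathbf{e}_{n+1},\ \mathbf{s}^1-\beta_n\mathbf{e}_{n},\ \dots,\ \mathbf{s}^n-\beta_n\mathbf{e}_{1}).
\]
That is, $\mathbf{s}^i\mapsto\mathbf{s}^i-\beta_n\mathbf{e}_{n+1-i}$, which makes sense because $1\le n+1-i\le n+1$. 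This map sends the initial state $\langle\mu_1,\dots,\mu_n\rangle$ to $\langle\delta(\beta)\rangle$ and the terminal state $\langle\beta_n\rangle$ to $\langle 0\rangle$. The inverse simply adds the same vectors back.

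The main step is checking that $\Psi$ and $\Psi^{-1}$ preserve validity, using \cref{obs_valid_sequence}.
\begin{enumerate}[(i)]
\item Every state has its total changed by exactly $-\beta_n$, so the condition that all states have equal sums is preserved.
\item For the inequalities $s^{i-1}_{k+1}\le s^i_k$, the entry $s^{i-1}_{k+1}$ is modified precisely when $k+1=n+1-(i-1)$, i.e.\ $k=n+1-i$. This is exactly when $s^i_k$ is modified, and both are modified by the same amount $-\beta_n$. Hence every inequality is preserved, in both directions.
\end{enumerate}
One small point to check is the nonnegativity of state sums in the definition of a juggling state. After the shift, the sums are $0$, which is allowed; before the shift, they are $\beta_n\ge 0$.

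Composing this bijection with the two theorems gives $|\Barc(\beta)|=K(\mu)$. I expect no real obstacle; the only delicate part is getting the index bookkeeping of the descending magic balls right.
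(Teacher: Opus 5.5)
Your proof is correct and takes the same route as the paper: combine \Cref{thm:bijection} with \Cref{thm:magic_juggling_KPF}, then biject $\JS(\langle\delta(\beta)\rangle,\langle 0\rangle,n)$ with $\JS(\langle\mu_1,\dots,\mu_n\rangle,\langle\beta_n\rangle,n)$ by removing the $\beta_n$ magic balls that start at height $n+1$. Your map $\Psi$, checked against \cref{obs_valid_sequence}, makes explicit what the paper states informally as deleting the $(n+1)$st entry of the initial state, which the paper justifies only by remarking that this entry never affects any throw or catch.
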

\begin{proof}
    By \Cref{thm:bijection}, $\mathbf{Barc}(\beta)$ and $\JS(\langle\delta(\beta)\rangle, \langle 0 \rangle, n)$ have the same cardinality.
    The weight $\mu$ can be alternatively written as $\sum\limits_{i=1}^{n+1} \delta(\beta)_i \mathbf{e}_{i}$. If we let \[\mathcal{J}\coloneqq\JS(\langle \delta(\beta)_1, \ldots, \delta(\beta)_n \rangle, \langle \delta(\beta)_1 + \ldots+ \delta(\beta)_n \rangle, n),\] then by \Cref{thm:magic_juggling_KPF}, $K(\mu)= |\mathcal{J}|$.
    Notice that because 
    \[\delta(\beta)_{n+1} = - \sum_{i=1}^n \delta(\beta)_i,\] if we let $S \in \JS(\langle\delta(\beta)\rangle, \langle 0 \rangle, n)$, then the deletion of the $(n+1)$st entry from the first state of $S$ yields an element of $\mathcal{J}$. Since this entry never affects the balls that are thrown or caught in a sequence of length $n$, this deletion yields a bijection between $\JS(\langle\delta(\beta)\rangle, \langle 0 \rangle, n)$ and $\mathcal{J}$.
\end{proof}


\section{Discussion and Future Work}

As highlighted in \cite{hensel2021survey}, many applications tend to avoid the use of persistent homology directly and instead use the downstream Betti curves, which are simple to define, stable, and amenable to standard statistical techniques such as smoothing or averaging.
Our main result thus quantifies precisely how much information is lost by using the Betti curve.
As \Cref{fig:232_barcode_table} shows, the simple Betti curve
$\beta =(2,3,2)$ can be obtained by 13 different barcodes.
For another example, if $\beta=(2,3,1,1,1)$, computer calculation reveals that there are 32 different barcodes with this Betti curve.
By leveraging the new connections between TDA and algebraic combinatorics established in this paper, it's likely that many existing results can find new applications under this perspective.
Finally, as the Kostant partition function can be studied for very general Lie algebra representations, there is a natural question as to whether these connections can be leveraged to study inverse problems for multiparameter persistence.

\bibliography{ref}{}
\bibliographystyle{plain}

\end{document}